\definecolor{cobalt}{RGB}{61,89,171}
\newcommand*{\defeq}{\mathrel{\vcenter{\baselineskip0.5ex \lineskiplimit0pt
                     \hbox{\scriptsize.}\hbox{\scriptsize.}}}%
                     =}
\newcommand{\id}{\textrm{id}}
\newtheorem{theorem}{Theorem}[section]
\newtheorem*{theorem*}{Theorem}
\newtheorem{corollary}[theorem]{Corollary}
\theoremstyle{definition}
\theoremstyle{remark}
\newtheorem{remark}[theorem]{Remark}
\numberwithin{equation}{section}
\newcommand{\bN}{\mathbb{N}}
\newcommand{\bQ}{\mathbb{Q}}
\newcommand{\bR}{\mathbb{R}}
\newcommand{\bZ}{\mathbb{Z}}
\newcommand{\fg}{\mathfrak{g}}
\newcommand{\fh}{\mathfrak{h}}
\newcommand{\bs}{\backslash}
\newcommand{\Aut}{\mathrm{Aut}}
\newcommand{\Aff}{\mathrm{Af{}f}}
\begin{document}

\title{Vaisman nilmanifolds}


\author{Giovanni Bazzoni}
\address{Philipps-Universit\"at Marburg\\FB Mathematik \& Informatik\\
Hans-Meerwein-Str.\ 6 (Campus Lahnberge), 35032, Marburg}
\curraddr{}
\email{bazzoni@mathematik.uni-marburg.de}
\thanks{}

\subjclass[2010]{53C55, 55P62, 53D35}

\keywords{Vaisman manifolds, nilmanifolds, Sasakian manifolds.}

\date{}

\begin{abstract}
We prove that if a compact nilmanifold $\Gamma\bs G$ is endowed with a Vaisman structure, then $G$ is isomorphic to the Cartesian product of the Heisenberg group with $\bR$. 
\end{abstract}

\maketitle

\section{Introduction and preliminaries}\label{Sec:1}

In this section we recall a few facts about nilmanifolds and about the geometric structures that play a role in this note; we take advantage of this preamble to put our result 
in the right perspective.

A \emph{nilmanifold} $N$ is a compact quotient of a connected, simply connected, nilpotent Lie group $G$ by a lattice $\Gamma$. Every such $G$ is dif{}feomorphic to $\bR^n$ for some $n$, hence the 
natural projection $G\to N$ is the universal cover and $\pi_1(N)=\Gamma$. In particular, nilmanifolds are aspherical spaces. Being a subgroup of a nilpotent group, $\Gamma$ is nilpotent.

Recall that a space $X$ is nilpotent if $\pi_1(X)$ is a nilpotent group and $\pi_n(X)$ is a nilpotent $\pi_1(X)$-module for $n\geq 2$. Since nilmanifolds are aspherical spaces with nilpotent 
fundamental groups, they are nilpotent spaces. Therefore, the fundamental result of rational homotopy theory implies that the rational homotopy type of a 
nilmanifold is codified in its minimal model (see \cite{Halperin}). By a result of Hasegawa (\cite{Hasegawa}), the minimal model of a nilmanifold $N=\Gamma\bs G$ is isomorphic to the 
Chevalley-Eilenberg complex $(\Lambda\,\fg^*,d)$, where $\fg$ is the Lie algebra of $G$ and $\fg^*$ is its dual (see also \cite[Theorem 3.18]{FOT}). In particular, the minimal model of a 
nilmanifold is generated in degree 1. Notice that, by a result of Mal'cev, a connected and simply connected nilpotent Lie group admits a lattice if and only if its Lie algebra 
admits a basis with respect to which the structure constants are rational numbers. Thus the minimal model of a nilmanifold $N=\Gamma\bs G$ is automatically defined over $\bQ$.

Simple examples of nilmanifolds are provided by tori; more complicated examples are the so-called Heisenberg nilmanifolds. These are quotients of the Heisenberg group $H(1,n)$ ($n\geq 1$) 
by a lattice. Recall that $H(1,n)$ can be identified with the matrix group

\[
 H(1,n)=\left\{\begin{pmatrix} 1 & y_1 & y_2 & \ldots & y_n & z\cr
0 & 1 & 0 & \ldots & 0 & x_1\cr
\vdots & 0 & \ddots & \ddots & \vdots & x_2\cr
\vdots & \vdots & \ddots & \ddots & 0 & \vdots\cr
\vdots & \vdots &  & \ddots & 1 & x_n\cr
0 & 0 & \ldots & \ldots & 0 & 1\cr
\end{pmatrix} \ | \ x_i,y_i,z\in\bR, \ i=1,\ldots,n
\right\}
\]

By considering a nilmanifold quotient of $H(1,1)\times\bR$, Thurston gave in \cite{Thurston} the first example of a compact symplectic manifold without any K\"ahler structure; indeed, any compact 
quotient of $H(1,1)\times\bR$ has $b_1=3$. Every K\"ahler manifold is formal \cite{DGMS}, 
but Hasegawa proved (\cite{Hasegawa}) that a nilmanifold is formal if and only if it is dif{}feomorphic to a 
torus. It turns out that a nilmanifold quotient of $H(1,n)\times \bR$ admits a symplectic structure if and only if $n=1$. 

Although tori are the only K\"ahler nilmanifolds, a nilmanifold quotient of $H(1,n)\times \bR$ ($n\geq 1$) admits a special kind of Hermitian structure, namely a locally conformal K\"ahler structure. A Hermitian structure $(g,J)$ on a 
$2n$-dimensional manifold $M$ ($n\geq 2$) is called \emph{locally conformal K\"ahler}, \emph{lcK} for short, if the fundamental form $\omega\in\Omega^2(M)$, defined by $\omega(X,Y)=g(JX,Y)$, 
satisfies the equation
\[
 d\omega=\theta\wedge\omega,
\]
where $\theta\in\Omega^1(M)$ is the Lee form, defined by $\theta(X)=-\frac{1}{n-1}(\delta\omega)(JX)$. Clearly, if $\theta=0$, the Hermitian structure is K\"ahler. If $\theta$ is exact, 
then one can make a \emph{global} conformal change of the metric so that the corresponding rescaled fundamental 2-form is closed. In general, $\theta$ is only locally exact; hence one can always 
find a \emph{local} conformal change of the metric such that the corresponding rescaled fundamental 2-form is closed. When talking about 
lcK structures, it is customary to assume that $\theta$ is nowhere vanishing. If the manifold is compact, this prevents $\theta$ from being exact. LcK metrics on compact complex surfaces have been 
studied by Belgun, \cite{Belgun}; homogeneous lcK structures have been investigated in \cite{ACHK,GMO,HK}. Recently, lcK manifolds have attracted interest in Physics, 
as possible ``dif{}ferent'' compactifications of manifolds, arising in supersymmetry constructions, which locally posses a K\"ahler structure (see \cite{Shahbazi1,Shahbazi2}). We refer the reader 
to \cite{DO} for further details on lcK geometry.

Concerning lcK structures on nilmanifolds, it was conjectured by Ugarte in \cite{Ugarte} that if a nilmanifold $\Gamma\bs G$ of dimension $2n+2$ is endowed with a lcK structure, then $G$ is 
isomorphic to the product of $H(1,n)$ with $\bR$. This conjecture was proven by Sawai in \cite{Sawai} under the additional assumption that the complex structure of the lcK structure is 
left-invariant. As a byproduct of \cite{BazzoniMarrero} (see also \cite{BazzoniMarrero2}), this conjecture was proven for 4-dimensional nilmanifolds. However, it remains open for lcK structures 
on nilmanifolds of dimension $\geq 6$ with non-left-invariant complex structure.

\emph{Vaisman structures} are a particular type of lcK structures, those for which the Lee form is parallel with respect to the Levi-Civita connection. They were introduced by Vaisman in 
\cite{Vaisman1,Vaisman} under the name of generalized Hopf structures. Akin to the K\"ahler case, it turns out that when the underlying 
manifold is compact, the existence of a Vaisman structure influences the topology; for instance, the first Betti number of a compact Vaisman manifold is odd. However, there exist compact lcK manifolds whose first Betti number is even (see \cite{OT}). Recently, a Hard Lefschetz property for compact Vaisman manifolds has been studied in \cite{CMDNMY2}. As 
an application, the authours constructed locally conformal symplectic manifolds of the first kind with no compatible Vaisman metrics (see also \cite{BazzoniMarrero} for more such examples).

The main result of this paper, proved in Section \ref{sec:2}, is a positive answer to the conjecture of Ugarte in case the lcK structure is Vaisman:

\begin{theorem*}
 Let $V=\Gamma\bs G$ be a compact nilmanifold of dimension $2n+2$, $n\geq 1$, endowed with a Vaisman structure. Then $G$ is isomorphic to $H(1,n)\times\bR$.
\end{theorem*}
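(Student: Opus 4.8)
The plan is to work at the level of Lie algebras, using Hasegawa's theorem and Malcev's rationality criterion in the background, but the real engine will be the structure theory of Vaisman manifolds combined with the cohomology of nilmanifolds. Recall that a compact Vaisman manifold $V^{2n+2}$ carries a parallel Lee form $\theta$ of constant (nonzero, after normalization) length, and the pair $(\theta,\theta\circ J)=(\theta,\eta)$ generates a foliation whose leaves are canonically Sasakian; more precisely, $V$ is, up to finite cover, the total space of a flat principal $S^1$-bundle (or a mapping torus) over a compact Sasakian manifold $W^{2n+1}$, and $\theta$ is the pullback of the fibre-length form. Because $V=\Gamma\bs G$ is a nilmanifold its real cohomology is computed by the Chevalley--Eilenberg complex $(\Lambda\fg^*,d)$, and $b_1(V)=\dim(\fg/[\fg,\fg])$. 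A compact Vaisman manifold has $b_1$ odd, so in particular $\fg$ is non-abelian and $V$ is not a torus.

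\medskip

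\noindent\textbf{Step 1: Reduce to a Sasakian nilmanifold.} First I would show that the mapping-torus/$S^1$-bundle structure descends to the Lie-algebra level. The Lee vector field $B$ (metric dual of $\theta$) and $A=JB$ are Killing, commute, and are parallel; the distribution they span is integrable with a transverse Sasakian structure. The key point is that for a nilmanifold these canonical vector fields, although a priori not left-invariant, can be replaced by invariant ones: one averages over $G/\Gamma$ (which is compact) using the nilpotent group structure, or equivalently one uses the fact that harmonic forms on a nilmanifold are invariant (Nomizu/Hasegawa), applied to the parallel — hence harmonic — $1$-form $\theta$. This forces $\theta$ to be an invariant $1$-form, i.e.\ $\theta\in\fg^*$, closed, so $\theta$ descends to a nonzero element of $(\fg/[\fg,\fg])^*$. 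Then $\ker\theta=\fh$ is a codimension-one ideal, $\fg=\fh\oplus\bR B$ as Lie algebras (the $\bR B$ factor splits off because $B$ is central-ish: $[\theta]$ being a nonzero class plus parallelism of $\theta$ forces $\theta([\fg,\fg])=0$ and one checks $\mathrm{ad}_B$ acts trivially). The transverse structure then makes $\fh$ into a Sasakian Lie algebra, i.e.\ the Lie algebra of the compact Sasakian nilmanifold $W$.

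\medskip

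\noindent\textbf{Step 2: Identify Sasakian nilmanifolds with Heisenberg.} Now the problem is: classify the connected simply connected nilpotent Lie groups $H$ of dimension $2n+1$ admitting a lattice and a (left-invariant-after-averaging) Sasakian structure. I would argue: a Sasakian structure on $H$ gives a contact form $\eta\in\fh^*$ with $d\eta$ of maximal rank on $\ker\eta$, and the Reeb field $\xi$ is Killing and, by the same harmonicity/averaging argument or by the transverse-Kähler condition, its flow is an isometry with closed orbits; the transverse geometry $\ker\eta$ carries a transverse Kähler — hence transverse \emph{formal} — structure. The crucial contradiction comes from rational homotopy theory: Hasegawa's theorem says a nilmanifold is formal only if it is a torus, and more sharply one can use the structure of the minimal model (generated in degree $1$) together with the transverse Hard Lefschetz property of compact Sasakian manifolds (from \cite{CMDNMY2}, or the classical Sasakian Hard Lefschetz). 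Transverse Hard Lefschetz forces the basic cohomology of the Reeb foliation to satisfy strong Lefschetz, which for a nilpotent Lie algebra $\fh/\bR\xi$ with the induced (nilpotent) structure forces it to be abelian — this is exactly the nilmanifold analogue of ``Kähler $\Rightarrow$ formal $\Rightarrow$ torus.'' Hence $\ker\eta$ modulo the Reeb direction is abelian, $d\eta$ is a symplectic form on it, and $\eta\wedge(d\eta)^n\neq0$: this says precisely that $\fh$ has one-dimensional commutator $[\fh,\fh]=\bR\xi$ with $d\eta$ nondegenerate on $\fh/\bR\xi$, i.e.\ $\fh\cong\mathfrak{h}(1,n)$, the Heisenberg algebra. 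Therefore $\fg\cong\mathfrak{h}(1,n)\oplus\bR$ and $G\cong H(1,n)\times\bR$.

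\medskip

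\noindent\textbf{Main obstacle.} I expect the hard part to be Step 1, specifically justifying that the Vaisman data ($\theta$, $J$, the Sasakian leaf structure) may be taken left-invariant, or at least that enough of it descends to $\fg$ — Sawai's theorem needed left-invariance of $J$ as a hypothesis, and here we must extract it from the Vaisman (rather than merely lcK) condition. The leverage is that $\theta$ is parallel, hence harmonic, hence invariant on a nilmanifold, and that $A=JB$ is then also harmonic (since $J$ preserves the canonical structure and $A$ is parallel too): this pins down a $2$-dimensional invariant subspace and an invariant transverse complex structure on $[\fg,\fg]+\cdots$, and a careful averaging argument (Belgun-style, or the Milnor-type trick of averaging a left-invariant metric in the conformal class) should upgrade this to a fully invariant Vaisman structure without changing $\fg$. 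Once invariance is secured, Steps 2 is essentially linear algebra plus the formality/Hard-Lefschetz obstruction, which is by now standard for nilmanifolds.
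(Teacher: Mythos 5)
Your overall strategy---split off the Lee direction to obtain a Sasakian ``fiber,'' then identify that fiber with a Heisenberg quotient---mirrors the shape of the actual argument, but the execution of Step 1 has a genuine gap, and it is exactly the gap that makes Ugarte's conjecture hard. You assert that the parallel (hence harmonic) Lee form $\theta$ must be left-invariant because ``harmonic forms on a nilmanifold are invariant (Nomizu/Hasegawa).'' That is not what Nomizu's theorem says: it says the Chevalley--Eilenberg complex computes $H^*(\Gamma\bs G;\bR)$, i.e.\ every class has an invariant \emph{representative}. Harmonicity here is taken with respect to the Vaisman metric $g$, which is not assumed left-invariant, and for a non-invariant metric the harmonic representative of a class need not be invariant. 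Averaging $\theta$ over the compact quotient produces an invariant form in the same class but destroys parallelism, and there is no linear averaging of $J$ or of the transverse Sasakian data at all; the Belgun-type and homogeneous averaging arguments you allude to operate in the setting whose existence you are trying to establish. This is precisely why Sawai had to hypothesize left-invariance of $J$ and why the general lcK case of the conjecture remains open. Secondary issues in Step 1: even granting $\theta\in\fg^*$, you would need $[\theta]$ to be a rational class for $\ker\theta$ to integrate to a compact fiber, and ``one checks $\mathrm{ad}_B$ acts trivially'' is an assertion that itself presupposes the full invariant structure.

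The paper sidesteps invariance entirely. It invokes the Ornea--Verbitsky structure theorem, which says that \emph{any} compact Vaisman manifold is diffeomorphic to the mapping torus of a Sasakian automorphism of a compact Sasakian manifold $S$, with no invariance hypothesis. From the resulting extension $0\to\pi_1(S)\to\Gamma\to\bZ\to 0$ one deduces that $\pi_1(S)$ is finitely generated, torsion-free and nilpotent, hence a lattice in a simply connected nilpotent group $H$; the nilmanifold $\pi_1(S)\bs H$ and $S$ are aspherical with the same fundamental group, hence share a minimal model, which is generated in degree $1$. The theorem of Cappelletti-Montano--De Nicola--Marrero--Yudin (whose proof goes through Tievsky's model for Sasakian manifolds, not transverse Hard Lefschetz) then forces this minimal model to be the Chevalley--Eilenberg complex of $\fh(1,n)$, so $H\cong H(1,n)$; a splitting theorem produces a finite cover $S\times S^1\to V$ and, passing to simply connected groups, an isomorphism $G\cong H(1,n)\times\bR$. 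Your Step 2 is morally a re-derivation of this Sasakian input and is plausible in the genuinely left-invariant setting, but without a correct Step 1 the argument does not get off the ground. To salvage your outline, replace Step 1 by the Ornea--Verbitsky mapping-torus theorem and argue with fundamental groups and minimal models rather than with the geometric tensors themselves.
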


Vaisman structures are closely related to Sasakian structures. Recall that a \emph{Sasakian structure} on a manifold $S$ of dimension $2n+1$ consists of a Riemannian metric $h$, a tensor $\Phi$ of 
type $(1,1)$, a 1-form $\eta$ and a vector field $\xi$ subordinate to the following relations:
\begin{itemize}
 \item $\eta(\xi)=1$;
 \item $\Phi^2=-\id+\eta\otimes\xi$;
 \item $d\eta(\Phi X,\Phi Y)=d\eta(X,Y)$;
 \item $h(X,Y)=d\eta(\Phi X,Y)+\eta(X)\eta(Y)$;
 \item $N_\Phi+2d\eta\otimes \xi=0$,
\end{itemize}
where $N_\Phi$ is the Nijenhuis torsion of $\Phi$ - see  \cite{BoyerGalicki}. If $S$ is a Sasakian manifold and $\varphi\colon S\to S$ is a Sasakian automorphism, i.e.\ a dif{}feomorphism which 
respects the whole Sasakian structure, one can show that the mapping torus $S_\varphi$, defined as the quotient space of $S\times \bR$ by the $\bZ$-action generated by
\begin{equation}\label{mapping:torus}
1\cdot(s,t)=(\varphi(s),t+1), \quad (s,t)\in S\times\bR
\end{equation}
admits a Vaisman structure; clearly, taking the identity as Sasakian automorphism of $S$, we see that the product $S\times S^1$ admits a Vaisman structure. 
A converse of this result holds: if $V$ is a compact manifold endowed with a Vaisman structure, then one finds a Sasakian manifold $S$ and a Sasakian automorphism 
$\varphi\colon S\to S$ such that $V$ is dif{}feomorphic to $S_\varphi$ (see \cite[Structure Theorem]{OV} and \cite[Corollary 3.5]{OV2}).

The interplay between Vaisman and Sasakian structures on compact manifolds plays a crucial role in this paper.

A left-invariant Vaisman structure on $H(1,n)\times\bR$, descending to a Vaisman structure on every compact quotient of $H(1,n)\times\bR$ by a lattice, was first described in \cite{CFdL}. We recall it briefly.

The Lie algebra $\fh(1,n)$ of $H(1,n)$ has a basis $\{X_1,Y_1,\ldots,X_n,Y_n,\xi\}$ whose only non-zero Lie brackets are
\[
 [X_i,Y_i]=-\xi \quad \forall \ i=1,\ldots,n
\]
We take $h$ to be the metric making $\{X_1,Y_1,\ldots,X_n,Y_n,\xi\}$ orthonormal and define $\Phi$ by setting $\Phi(X_i)=Y_i$, $\Phi(Y_i)=-X_i$, $i=1,\ldots,n$, and $\Phi(\xi)=0$.
If $\{\alpha_1,\beta_1,\ldots,\alpha_n,\beta_n,\eta\}$ is the dual basis of $\fh(1,n)^*$, $(h,\Phi,\eta,\xi)$ is a Sasakian structure on $\fh(1,n)$. On the sum $\fh(1,n)\oplus\bR$, a Vaisman structure is given by taking the metric $g=h+dt^2$, where $\partial_t$ generates the $\bR$-factor, and the integrable almost complex structure $J$ defined by
\[
 J(X_i)=Y_i, \quad J(Y_i)=-X_i \quad \text{and} \quad J(\xi)=\partial_t, \ i=1,\ldots,n.
\]
Here the Lee form is $\theta=dt$. Hence $(h,\Phi,\eta,\xi)$ defines a left-invariant Sasakian structure on $H(1,n)$ and $(g,J)$ defines a left-invariant Vaisman structure on $H(1,n)\times\bR$.
Therefore, a Sasakian structure exists on every quotient of $H(1,n)$ by a lattice and a Vaisman structure exists on every quotient of $H(1,n)\times\bR$ by a lattice. As a particular choice of 
lattice in $H(1,n)$ one may take $\Lambda(1,n)$, consisting of the matrices with integer entries. Thus $\Lambda(1,n)\bs H(1,n)$ is a compact Sasakian nilmanifold. 
Taking the lattice $\Lambda(1,n)\times\bZ\subset H(1,n)\times\bR$, we obtain a Vaisman structure on the nilmanifold $(\Lambda(1,n)\times\bZ)\bs (H(1,n)\times\bR)$. Vaisman structures on compact solvmanifolds obtained as mapping tori of nilmanifolds quotient of $H(1,n)$ by a Sasakian automorphism were obtained in \cite{MP}.

In Section \ref{sec:3} we present examples of nilmanifolds and infra-nilmanifolds endowed with Vaisman structures.

\section{Main result}\label{sec:2}

In this section we prove the main result of this paper, namely:

\begin{theorem}\label{main}
 Let $V=\Gamma\bs G$ be a compact nilmanifold of dimension $2n+2$, $n\geq 1$, endowed with a Vaisman structure. Then $G$ is isomorphic to $H(1,n)\times\bR$.
\end{theorem}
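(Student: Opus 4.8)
The plan is to combine the Structure Theorem for compact Vaisman manifolds with the rigidity of nilmanifolds, reducing the problem to a statement about compact Sasakian manifolds.

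\emph{Step 1 (reduction to a Sasakian fibre).} By the Structure Theorem of Ornea--Verbitsky together with \cite{OV2}, $V$ is dif{}feomorphic to a mapping torus $S_\varphi$, where $S$ is a compact Sasakian $(2n+1)$-manifold and $\varphi\colon S\to S$ is a Sasakian automorphism. Since $V$ is a nilmanifold it is aspherical with $\pi_1(V)=\Gamma$ nilpotent, and the universal cover of $S_\varphi$ is $\widetilde S\times\bR$; hence $S$ is aspherical, and $\pi_1(S)$, being a subgroup of the finitely generated nilpotent group $\Gamma$, is itself finitely generated, torsion-free and nilpotent. By Mal'cev, $\pi_1(S)$ is a lattice $\Gamma'$ in a connected, simply connected nilpotent Lie group $G'$ with Lie algebra $\fg'$. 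As $S$ and the nilmanifold $N'=\Gamma'\bs G'$ are both $K(\Gamma',1)$'s they are homotopy equivalent, so by Hasegawa's theorem \cite{Hasegawa} the minimal model of $S$ is the Chevalley--Eilenberg algebra $(\Lambda\,(\fg')^*,d)$ and $H^*(S;\bR)\cong H^*(\fg')$.

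\emph{Step 2 (the crux).} I claim that a compact Sasakian manifold homotopy equivalent to a nilmanifold must be a Heisenberg nilmanifold, i.e.\ $\fg'\cong\fh(1,n)$; this is the step I expect to require genuine work. I would pass to the transverse Kähler geometry of the Reeb foliation of $S$: its basic cohomology $H_B^*$ is a formal Poincaré-duality algebra of formal dimension $2n$ satisfying the hard Lefschetz property for the transverse Kähler class $[d\eta]_B\in H_B^2$, and $H^*(S)$ is recovered from $H_B^*$ through the Gysin sequence of the (orbifold) $S^1$-bundle $S\to B$ over the leaf space. Feeding this into the fact that $(\Lambda\,(\fg')^*,d)$ is generated in degree $1$ forces $H_B^*$ to be the cohomology of a nilpotent Lie algebra carrying a hard Lefschetz class in degree $2$; by an argument in the spirit of the Benson--Gordon obstruction (a symplectic nilpotent Lie algebra satisfying hard Lefschetz is abelian) this yields $H_B^*\cong H^*(T^{2n})$. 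Now the description of the minimal model of a Sasakian manifold in terms of its basic cohomology (Tievsky) identifies the minimal model of $S$ with the differential graded algebra obtained from $H^*(T^{2n})$ by adjoining one generator $z$ in degree $1$ with $dz=[d\eta]_B$; since $[d\eta]_B^{\,n}\neq 0$, the class $[d\eta]_B$ is nondegenerate, hence equivalent after a linear change of the degree-$1$ generators to $\sum\alpha_i\wedge\beta_i$, and the resulting algebra is exactly $(\Lambda\,\fh(1,n)^*,d)$. By Hasegawa's theorem \cite{Hasegawa} once more, this identifies $\fg'$ with $\fh(1,n)$, i.e.\ $G'\cong H(1,n)$.

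\emph{Step 3 (promoting the structure to $G$).} From $\pi_1(V)=\Gamma=\Gamma'\rtimes_{\varphi_*}\bZ$, Mal'cev completion gives $\fg=\fg'\rtimes_D\bR$ for a derivation $D$ of $\fg'$, and $D$ is nilpotent because $\fg$ is; hence $\phi\defeq\exp D\in\Aut(\fg')$ is unipotent. Now $\varphi$, being a Sasakian automorphism, is an isometry of the compact manifold $S$, so it lies in the compact Lie group $\mathrm{Isom}(S)$, whose image in $\mathrm{Out}(\pi_1 S)$ is finite; therefore $\varphi_*^{\,k}$ is an inner automorphism of $\Gamma'$ for some $k\geq 1$, whence $\phi^k=\exp(kD)$ is an inner automorphism of $\fg'$. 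As $\exp$ restricts to a bijection from nilpotent derivations to unipotent automorphisms of the nilpotent Lie algebra $\fg'$, we get $kD=\mathrm{ad}_y$ for some $y\in\fg'$, so $D=\mathrm{ad}_{y/k}$ is inner. Replacing the $\bR$-generator $T$ of $\fg$ by $T-y/k$ then exhibits $\fg$ as the direct sum of the ideal $\fg'$ and a central line, so $\fg\cong\fg'\oplus\bR\cong\fh(1,n)\oplus\bR$ and $G\cong H(1,n)\times\bR$. The whole difficulty is concentrated in Step 2; Steps 1 and 3 are essentially formal once the Structure Theorem is available.
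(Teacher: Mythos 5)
Your overall architecture coincides with the paper's: apply the Ornea--Verbitsky Structure Theorem to present $V$ as a mapping torus of a Sasakian automorphism of a compact Sasakian manifold $S$, show $\pi_1(S)$ is a lattice in a simply connected nilpotent group whose associated nilmanifold is homotopy equivalent to $S$, identify that group with $H(1,n)$, and then upgrade the group-theoretic splitting to an isomorphism $G\cong H(1,n)\times\bR$. Your Step 1 is the paper's verbatim. Your Step 3, however, is a genuinely different ending: the paper invokes its splitting theorem (a finite cover $S\times S^1\to V$ with diagonal $\bZ_m$-action) and then Raghunathan's extension of lattice homomorphisms to obtain a covering homomorphism $H(1,n)\times\bR\to G$ between simply connected groups, hence an isomorphism; you instead write $\fg=\fg'\rtimes_D\bR$, use that $\varphi$ lies in the compact group $\mathrm{Isom}(S)$ so that its class in $\mathrm{Out}(\pi_1 S)$ has finite order, deduce that $kD$ is the logarithm of an inner automorphism and hence that $D$ is an inner derivation, and split the extension by recentering the $\bR$-generator. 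This argument is correct and self-contained (it in effect reproves the finiteness input that the paper imports from its splitting theorem), at the cost of being a little more delicate about Mal'cev rigidity and the injectivity of $\exp$ on nilpotent derivations.

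The real issue is Step 2, which you rightly identify as the crux. The paper does not prove this step from scratch: it cites \cite[Theorem 1.1]{CMDNMY}, observing that the proof there shows that a nilpotent space with a Sasakian structure whose minimal model is generated in degree $1$ has the minimal model of a Heisenberg nilmanifold (this extra generality is needed precisely because $S$ is only homotopy equivalent to a nilmanifold, not known to be one). Your sketch follows the same Tievsky-model strategy, but it contains an unjustified assertion at its center: you claim that the hypothesis ``minimal model generated in degree $1$'' forces the basic cohomology $H_B^*$ of the Reeb foliation to be \emph{the cohomology of a nilpotent Lie algebra}, so that a Benson--Gordon-type argument applies. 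Nothing supports this: $H_B^*$ is an invariant of the transverse K\"ahler geometry of $S$, not of its homotopy type, and the Reeb foliation has no a priori relation to the nilmanifold structure on the homotopy-equivalent model $P$. (In the model case $S=\Lambda\bs H(1,n)$ one has $H_B^*\cong H^*(\fh(1,n)/\la\xi\ra)$ because the Reeb field is central and left-invariant, but that is a conclusion, not a hypothesis.) The correct route, as in the cited paper, is to compare the minimal model of Tievsky's model $(H_B^*\otimes\Lambda(x),\,dx=[d\eta]_B)$ directly with the degree-$1$-generated minimal model $(\Lambda\,(\fg')^*,d)$, using Poincar\'e duality and transverse hard Lefschetz to pin down $H_B^*\cong H^*(T^{2n})$; once that is established, your concluding normalization of $[d\eta]_B$ to $\sum\alpha_i\wedge\beta_i$ and the identification with $(\Lambda\,\fh(1,n)^*,d)$ are fine. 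So either supply that comparison argument or cite \cite{CMDNMY} for it; as written, Step 2 has a genuine gap.
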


\begin{proof}
By combining \cite[Structure Theorem]{OV} and \cite[Corollary 3.5]{OV2}, there exist a compact Sasakian manifold $S$ and a Sasakian automorphism $\varphi\colon S\to S$ such that $V$ is 
dif{}feomorphic to the mapping torus $S_\varphi$. Hence $V$ fibers over $S^1$ with fiber $S$ and the structure group of this fibration is generated by $\varphi$. A quick inspection of the long 
exact sequence of homotopy groups of $S\to V\to S^1$ shows that $S$ is an aspherical manifold and that $\Lambda\defeq\pi_1(S)$ sits in the short exact sequence
 \begin{equation}\label{fundamental_group}
  0\to \Lambda\to \Gamma\to\bZ\to 0.
 \end{equation}
Since $\Gamma$ is finitely generated, nilpotent and torsion-free, the same is true for $\Lambda$. By \cite[Theorem II.2.18]{Ragunathan}, we find a 
connected and simply connected nilpotent Lie group $H$ such that $\Lambda\subset H$ is a lattice and $P\defeq\Lambda\bs H$ is a compact nilmanifold. Notice that $P$ and $S$ 
are both aspherical manifolds with the same fundamental group; by standard results on aspherical manifolds (see for instance \cite{Lück}), they are homotopy equivalent. In particular, 
they have the same rational homotopy type. Since $\Lambda=\pi_1(P)=\pi_1(S)$ is nilpotent and both $P$ and $S$ are aspherical, they are nilpotent spaces. For nilpotent spaces, 
the rational homotopy type is codified in the minimal model, hence $P$ and $S$ have the same minimal model. 
 
Since $P$ is a nilmanifold, its minimal model is the Chevalley-Eilenberg complex $(\Lambda\,\fh^*,d)$, where $\fh$ is the Lie algebra of $H$. $S$ is not necessarily 
a nilmanifold, yet it is endowed with a Sasakian structure. In \cite[Theorem 1.1]{CMDNMY}, the authors prove that a compact nilmanifold of dimension $2n+1$, endowed with a Sasakian structure, 
is dif{}feomorphic to a quotient of the Heisenberg group $H(1,n)$ by a lattice. Their proof uses a certain property of the rational homotopy type of compact manifolds endowed with a Sasakian 
structure discovered by Tievsky (see \cite{Tievsky}). By carefully analyzing their arguments, one sees that they show more than what they claim: they prove that if a nilpotent manifold has a minimal model which is generated in degree 1, 
then such a minimal model is isomorphic to the minimal model of some compact quotient of the Heisenberg group $H(1,n)$ by a lattice, i.e.\ to the Chevalley-Eilenberg complex 
$(\Lambda\,\fh(1,n)^*,d)$, where $\fh(1,n)$ is the Lie algebra of $H(1,n)$. Now the minimal model of $S$, being isomorphic to that of $P$, is generated in degree 1, 
hence we conclude that $\fh$ is isomorphic to $\fh(1,n)$. Since $H$ and $H(1,n)$ are simply connected, they are isomorphic. From now on we replace $\fh$ by $\fh(1,n)$ and $H$ by $H(1,n)$. 
As a consequence, $\Lambda$ is contained in $H(1,n)$ as a lattice and $P=\Lambda\bs H(1,n)$.

By \cite[Corollary 3.4]{BMO}, there exists a finite cover $\pi\colon S\times S^1\to V$, say $m:1$, whose deck group $\bZ_m$ acts diagonally and by translations on the second factor; moreover, 
we have a commuting diagram of fiber bundles
\[
\xymatrix{
S\ar[r]\ar[d]_{\id} & S\times S^1\ar[d]_\pi\ar[r]^-{\text{pr}_2} & S^1\ar[d]^{\cdot m}\\
S\ar[r] & V\ar[r] & S^1
}
\]

where $\text{pr}_2$ is the projection on the second factor. Considering the induced maps on fundamental groups, we obtain a diagram of group homomorphisms
\[
\xymatrix{
1\ar[r] & \Lambda\ar[r]\ar[d]_{\id} & \Lambda\times \bZ\ar[d]_{\pi_*}\ar[r]^-{\text{pr}_2} & \bZ\ar[d]^{\cdot m}\ar[r] & 1\\
1\ar[r] & \Lambda\ar[r] & \Gamma\ar[r] & \bZ\ar[r] & 1
}
\]
By \cite[Theorem II.2.11]{Ragunathan}, this gives a third diagram at the level of the corresponding connected, simply connected nilpotent Lie groups:
\[
\xymatrix{
1\ar[r] & H(1,n)\ar[r]\ar[d]_{\id} & H(1,n)\times \bR\ar[d]_{\varpi}\ar[r]^-{\text{pr}_2} & \bR\ar[d]\ar[r] & 1\\
1\ar[r] & H(1,n)\ar[r] & G\ar[r] & \bR\ar[r] & 1
}
\]

where all the maps are continuous homomorphisms. Since all the nilpotent Lie groups appearing in this third diagram are simply connected, they admit no non-trivial covers, 
hence the homomorphism $\varpi$ is actually a group isomorphism.
\end{proof}

\begin{corollary}
 The minimal model of a Vaisman nilmanifold is isomorphic to the Chevalley-Eilenberg complex of the Lie algebra $\fh(1,n)\oplus\bR$.
\end{corollary}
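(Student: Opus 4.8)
The plan is to read the statement off directly from Theorem~\ref{main} together with Hasegawa's theorem, both recalled above. First I would apply Theorem~\ref{main}: a Vaisman nilmanifold is, by definition, a compact nilmanifold $V=\Gamma\bs G$ carrying a Vaisman structure, so $G$ is isomorphic to $H(1,n)\times\bR$ and therefore the Lie algebra $\fg$ of $G$ is isomorphic to $\fh(1,n)\oplus\bR$. Dualizing such a Lie algebra isomorphism yields an isomorphism of differential graded algebras between the Chevalley-Eilenberg complexes $(\Lambda\,\fg^*,d)$ and $(\Lambda\,(\fh(1,n)\oplus\bR)^*,d)$, since the Chevalley-Eilenberg differential is built functorially out of the bracket.

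Next I would invoke Hasegawa's theorem, as stated in Section~\ref{Sec:1}: the minimal model of a nilmanifold $\Gamma\bs G$ is isomorphic to the Chevalley-Eilenberg complex $(\Lambda\,\fg^*,d)$ of the Lie algebra $\fg$ of $G$. Applied to $V$ and combined with the previous step, this shows that the minimal model of $V$ is isomorphic to $(\Lambda\,(\fh(1,n)\oplus\bR)^*,d)$, i.e.\ to the Chevalley-Eilenberg complex of $\fh(1,n)\oplus\bR$, which is exactly the assertion.

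There is essentially no obstacle here: all the substance is contained in Theorem~\ref{main}, and the two remaining ingredients — that isomorphic finite-dimensional Lie algebras have isomorphic Chevalley-Eilenberg complexes, and Hasegawa's description of the minimal model of a nilmanifold — are already available. For concreteness one could add that, with the basis $\{X_1,Y_1,\ldots,X_n,Y_n,\xi\}$ of $\fh(1,n)$ fixed in Section~\ref{Sec:1} and a generator $T$ spanning the $\bR$-summand, the complex $(\Lambda\,(\fh(1,n)\oplus\bR)^*,d)$ is the free graded-commutative algebra on degree-one generators $\alpha_1,\beta_1,\ldots,\alpha_n,\beta_n,\eta,\tau$ with $d\alpha_i=d\beta_i=d\tau=0$ and $d\eta=\sum_{i=1}^n\alpha_i\wedge\beta_i$.
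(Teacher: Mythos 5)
Your proposal is correct and is exactly the intended argument: the paper states this corollary without proof precisely because it follows immediately from Theorem~\ref{main} together with Hasegawa's identification of the minimal model of a nilmanifold $\Gamma\bs G$ with the Chevalley-Eilenberg complex of $\fg$, both of which you invoke. Your explicit description of the complex (with $d\eta=\sum_i\alpha_i\wedge\beta_i$) is also consistent with the bracket conventions fixed in Section~\ref{Sec:1}.
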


\begin{remark}
A nilmanifold $N$, quotient of the Heisenberg group $H(1,n)$, is never formal, since it is not dif{}feomorphic to a torus. Moreover, it is easy to construct a triple Massey product on $N$ (refer to
\cite{OTralle} for Massey products). However, since $N$ admits a Sasakian structure, all Massey products of higher order on $N$ vanish by \cite[Theorem 4.4]{BFMT}. Similarly, a nilmanifold quotient of $H(1,n)\times\bR$ is not formal and has triple Massey products. It would be interesting to understand to which extent the existence of a Vaisman structure on a compact manifold influences the vanishing of higher order Massey products (see also \cite{OP}).
\end{remark}

\begin{remark}
A fortiori, the manifolds $P$ and $S$ in the proof of Theorem \ref{main} both admit Sasakian structures. Whether the homotopy equivalence $P\to S$ 
provides even a homeomorphism or not is a dif{}ficult question. In particular, it is not clear that $S$ is itself a nilmanifold. In the framework of aspherical manifolds, whether the homotopy equivalence lifts or not to a homeomorphism is the content 
of the Borel conjecture, which is known to be false in the smooth category - see \cite{Lück}. In particular, we do not claim that the homotopy equivalence $P\to S$ provides a 
Sasakian dif{}feomorphism $P\to S$
\end{remark}

\section{Two examples}\label{sec:3}

In the proof of Theorem \ref{main} we did not say anything about the order of the Sasakian automorphism. We give an example in which such order, albeit finite, can be arbitrarily large. 
We also construct an infra-nilmanifold, finitely covered by a nilmanifold quotient of $H(1,n)\times\bR$, endowed with a Vaisman structure. 

\subsection{Example 1}
We consider the lattice $\Lambda(1,n)\subset H(1,n)$ consisting of matrices with integer entries; we denote an element of $\Lambda(1,n)$ by 
$(a_1,b_1,\ldots,a_n,b_n,c)$. For $m\in\bN$, $m\geq 2$, let us take the sublattice
\[
\Delta(1,n;m)=\{(a_1,b_1,\ldots,a_n,b_n,c)\in\Lambda(1,n) \ | \ a_1\in m\bZ\}\subset\Lambda(1,n);
\]
then $\Delta(1,n;m)\bs H(1,n)$ is a nilmanifold and there is an $m:1$ covering $\Delta(1,n;m)\bs H(1,n)\to N(1,n)\defeq\Lambda(1,n)\bs H(1,n)$ with deck group 
$\Delta(1,n;m)\bs\Lambda(1,n)\cong\bZ_m$. Being quotients of $H(1,n)$, both nilmanifolds have Sasakian structures, coming from the left-invariant Sasakian structure of $H(1,n)$. 
We denote by $\varphi$ the generator of $\bZ_m$; the action of $\varphi$ on $\Delta(1,n;m)\bs H(1,n)$ is covered by the left-translation
\[
 (x_1,y_1,\ldots,x_n,y_n,z)\mapsto(x_1+1,y_1,\ldots,x_n,y_n,z)
\]
on $H(1,n)$; since the Sasakian structure is left-invariant, this action is by Sasakian automorphisms. 
On $\Delta(1,n;m)\bs H(1,n)\times\bR$, we consider the $\bZ$-action generated by
\[
1\cdot(s,t)=(\varphi(s),t+1) \quad \text{for} \ (s,t)\in \Delta(1,n;m)\bs H(1,n)\times\bR.
\]
The mapping torus $(\Delta(1,n;m)\bs H(1,n))_\varphi$ has a Vaisman structure. By construction, $(\Delta(1,n;m)\bs H(1,n))_\varphi$ is dif{}feomorphic to the nilmanifold $N(1,n)\times S^1$.

\subsection{Example 2}
\emph{Infra-nilmanifolds} are quotients of connected and simply connected nilpotent Lie groups which are more general than nilmanifolds. They can be constructed as follows.
One starts with a connected, simply connected nilpotent Lie group $G$ and considers the group $\Aut(G)$ of automorphisms of $G$. The af{}fine group $\Aff(G)\defeq G\rtimes\Aut(G)$ acts on $G$ as 
follows:
\begin{equation}\label{eq:2}
 (g,\varphi)\cdot h=g\varphi(h), \quad g,h\in G, \ \varphi\in\Aut(G).
\end{equation}

One takes then a compact subgroup $K\subset\Aut(G)$ and any discrete, torsion-free subgroup $\Xi\subset G\rtimes K$, such that $\Xi\bs G$ is compact; here $\Xi$ acts on 
$G$ according to \eqref{eq:2}. The action of $\Xi$ on $G$ is free and properly discontinuous, hence $\Xi\bs G$ is a manifold. By definition, $\Xi\bs G$ is an infra-nilmanifold
(see \cite{Dekimpe}). By \cite[Theorem 1]{Auslander}, $\Xi\cap G$ is a uniform lattice in $G$ and $(\Xi\cap G)\bs \Xi$ is a finite group, hence the fundamental group of an 
infra-nilmanifold is virtually nilpotent. The finite group $(\Xi\cap G)\bs \Xi$ is called the \emph{holonomy group} of $\Xi$. If it is trivial, i.e.\ $\Xi\subset G$, then $\Xi\bs G$ is a 
nilmanifold. In general, an infra-nilmanifold $\Xi\bs G$ is finitely covered by a nilmanifold $(\Xi\cap G)\bs G$.

Again we consider the Heisenberg group $H(1,n)$, $n\geq 2$, and the lattice $\Lambda(1,n)$. The map $\phi\colon \Lambda(1,n)\to\Lambda(1,n)$, 
\[
(a_1,b_1,a_2,b_2,\ldots,a_n,b_n,c)\mapsto (a_2,b_2,a_1,b_1,\ldots,a_n,b_n,c) 
\]
is a group automorphism of order 2, which extends to a group automorphism $H(1,n)\to H(1,n)$ denoted again $\phi$. We consider the lattice $\Gamma\defeq\Lambda(1,n)\times\bZ\subset G\defeq H(1,n)\times\bR$, the af{}fine group $\Aff(G)=G\rtimes\Aut(G)$ and the subgroup of $\Aut(G)$ generated by the automorphism $\sigma\colon G\to G$,
\[
 \sigma(h,t)=(\phi(h),t), \quad (h,t)\in G.
\]
Then $\langle\sigma\rangle\subset \Aut(G)$ is isomorphic to $\bZ_2$, hence compact. Finally, we take the lattice $\Xi=\Gamma\rtimes\langle\sigma\rangle\subset\Aff(G)$ and 
consider the infra-nilmanifold $V=\Xi\bs G$. Clearly $\Xi\cap G=\Gamma$ and $\Gamma\bs\Xi\cong\bZ_2$, hence the holonomy of $\Xi$ is non-trivial and $V$ is covered $2:1$ by the nilmanifold $\Gamma\bs G$.

Finally, we show that $V$ can be seen as the mapping torus of a Sasakian automorphism of order 2 of $N(1,n)=\Lambda(1,n)\bs H(1,n)$, hence it carries a Vaisman structure. First of all,
by construction the automorphism $\phi\colon H(1,n)\to H(1,n)$ preserves $\Lambda(1,n)$, hence descends to a dif{}feomorphism $\varphi\colon N(1,n)\to N(1,n)$. One checks easily that $\varphi$ preserves the Sasakian structure. The mapping torus $N(1,n)_\varphi$, 
defined as the quotient of $N(1,n)\times\bR$ by the $\bZ$-action given by \eqref{mapping:torus}, carries a Vaisman structure. Clearly $N(1,n)_\varphi$ is dif{}feomorphic to the infra-nilmanifold $V$.

\section*{Acknowledgements}
I am indebted to S\"onke Rollenske for a decisive conversation and for bringing infra-nilmanifolds to my attention. 
I would also like to thank Juan Carlos Marrero, Vicente Mu\~noz and John Oprea for their useful comments on an earlier version of this paper.


\end{document}